\theoremstyle{plain}
\newtheorem{thm}{Theorem}
\newtheorem{lem}{Lemma}
\newtheorem{conj}{Conjecture}
\theoremstyle{definition}
\theoremstyle{remark}
\numberwithin{equation}{section} % to get equations numbered
\begin{document}
\title[More on Comparison between $GA$ Index and $ABC$ Index]{More on Comparison between First Geometric-Arithmetic Index and Atom-Bond Connectivity Index} % please provide
                                % an abbreviated title

\author{Zahid Raza}
\address{National University of Computer and Emerging Sciences\\ Department of Mathematics\\ B-Block, Faisal Town\\ 54000 Lahore\\ Pakistan}
\email{zahid.raza@nu.edu.pk}

\author{Akhlaq Ahmad Bhatti}
\address{National University of Computer and Emerging Sciences\\ Department of Mathematics\\ B-Block, Faisal Town\\ 54000 Lahore\\ Pakistan}
\email{akhlaq.ahmad@nu.edu.pk}

\author{Akbar Ali}
\address{National University of Computer and Emerging Sciences\\ Department of Mathematics\\ B-Block, Faisal Town\\ 54000 Lahore\\ Pakistan}
\email{akbarali.maths@gmail.com}

%\thanks{}

\begin{abstract}
The first geometric-arithmetic (GA) index  and atom-bond connectivity (ABC) index are molecular structure descriptors which play a significant role in quantitative structure-property relationship (QSPR) and quantitative structure-activity relationship (QSAR) studies. Das and Trinajsti\'{c} [\textit{Chem. Phys. Lett.} \textbf{497} (2010) 149-151] showed that $GA$ index is greater than $ABC$ index for all those graphs (except $K_{1,4}$ and $T^{*}$, see Figure 1) in which the difference between maximum and minimum degree is less than or equal to 3. In this note, it is proved that $GA$ index is greater than $ABC$ index for line graphs of molecular graphs, for general graphs in which the difference between maximum and minimum degree is less than or equal to $(2\delta-1)^{2}$ (where $\delta$ is the minimum degree and $\delta\geq2$) and for some families of trees. Therefore, a partial solution to an open problem proposed by Das and Trinajsti\'{c} is given.

\end{abstract}

%\dedicatory{This paper is dedicated to Professor X on his 125th birthday.}

\subjclass[2010]{05C07, 05C35, 92E10}

\keywords{first geometric-arithmetic index, atom-bond connectivity index, minimum and maximum degree of a graph, line graph}

\maketitle

\section{Introduction}
Let $G=(V,E)$ denote a graph with vertex set $V(G)=\{v_{1}, v_{2}, \ldots, v_{n}\}$ and edge set $E(G)$ such that $|E(G)|=m$. Suppose that $d_{i}$ is the degree of a vertex $v_{i}\in V(G)$ \cite{1}. All the graphs considered in this study are simple, finite and undirected.

Topological indices are numerical parameters of a graph which are invariant under graph isomorphisms. They play a significant role in mathematical chemistry especially in the quantitative structure-property relationship (QSPR) and quantitative structure-activity relationship (QSAR) investigations \cite{2,3}. A whole class of topological indices is the ``geometric-arithmetic indices''whose general definition is as follows \cite{4,5}:
\[GA_{general} = GA_{general}(G) = \sum_{ij\in E(G)}\frac{\sqrt{Q_{i}Q_{j}}}{\frac{1}{2}(Q_{i}+Q_{j})} ,\]
where $Q_{i}$ is some quantity that can be associated with the vertex $v_{i}$ of the graph $G$ in a unique manner. The first geometric-arithmetic ($GA$) index was proposed by Vuki\v{c}evi\'{c} and Furtula \cite{4} by setting $Q_{i}$ as the degree $d_{i}$ of the vertex $v_{i}$ of the graph $G$:
\[GA(G)=\sum_{ij\in E(G)}\frac{\sqrt{d_{i}d_{j}}}{\frac{1}{2}(d_{i}+d_{j})} .\]
It has been demonstrated, on the example of octane isomers, that GA index is well-correlated with a variety of physico-chemical properties \cite{4}.
The details about mathematical properties of the $GA$ indices and their applications in QSPR and QSAR can be found in the survey \cite{6} reported by Das, Gutman and Furtula.

Estrada et al. \cite{7} proposed a topological index, known as the atom-bond connectivity (ABC) index of graph $G$,
which is abbreviated as ABC(G) and defined as
\[ABC(G)=\sum_{ij\in E(G)}\sqrt{\frac{d_{i}+d_{j}-2}{d_{i}d_{j}}} .\]
The $ABC$ index provides a good model for the stability of linear and branched
alkanes as well as the strain energy of cycloalkanes \cite{7,8}. Due to its physico-chemical applicability, the $ABC$ index has attracted significant attention from researchers in recent years and many mathematical properties of this index were reported. For instance, see the papers \cite{10,11,12,13,14,15,16,17}, more precisely the recent ones \cite{9,22,23,24} and references cited therein.

A graph having maximum vertex degree at most 4 is known as a \textit{molecular graph}. The \textit{line graph} $L(G)$ of a graph $G$ has the vertex set $V(L(G)) = E(G)$ where the two vertices of $L(G)$ are adjacent if and only if the corresponding edges of $G$ are adjacent; detailed properties of a line graph can be found in \cite{1}. Some possible chemical applications of line graphs of molecular graphs were discussed in \cite{19}. Das and Trinajsti\'{c} \cite{18} compared the $GA$ and $ABC$ indices for molecular graphs and for general graphs in which the difference between maximum and minimum degree is less than or equal to three. Recently, the current authors \cite{21} derived a relation between $GA$ index and $ABC$ index. In the present work, these two indices are compared for line graphs of molecular graphs, for general graphs in which the difference between maximum and minimum degree is less than or equal to $(2\delta-1)^{2}$ (where $\delta$ is the minimum degree and $\delta\geq2$) and for some families of trees.

\begin{figure}
    \centering
    \includegraphics[width=2.6in, height=1.0in]{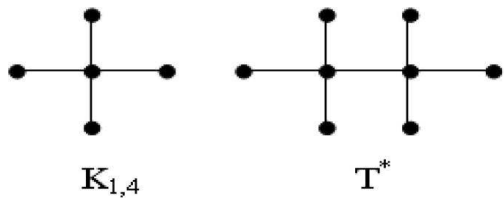}
    \caption{The molecular graphs $K_{1,4}$ and $T^{*}$}
    \label{fig:1}
\end{figure}

\section{Preliminaries}

The maximum and minimum vertex degree in a graph $G$ are denoted by $\Delta$ and $\delta$ respectively. Also, a vertex of the graph $G$ is said to be pendant if its neighborhood contains exactly one vertex. While, an edge of a graph is said to be pendant if one of its vertices is pendant.

The union $H\cup K$ of two graphs $H$ and $K$ is the graph with the vertex set $V(H)\cup V(K)$ and the edge set $E(H)\cup E(K)$. A tree in which exactly one of its vertices has degree greater than two is known as Starlike tree. Let $S(r_{1}, r_{2},\ldots, r_{k})$ denote the Starlike tree which has a vertex $v$ of degree $k>2$ such that the graph obtained from $S(r_{1}, r_{2},\ldots, r_{k})$ by removing the vertex $v$ is $P_{r_{1}}\cup P_{r_{2}}\cup \cdots \cup P_{r_{k}}$ where $P_{r_{i}}$ is the path graph on $r_{i}$ ($1\leq i\leq k$) vertices. We say that the Starlike tree $S(r_{1}, r_{2},\ldots, r_{k})$ has $k$ branches, the lengths of which are $r_{1}, r_{2},\ldots, r_{k}$ ($r_{1}\geq r_{2}\geq\cdots\geq r_{k}\geq1$), and has $\sum^{k}_{i=1}r_{i}+1$ vertices.

By a trivial graph, we mean a graph having one vertex. Denote by $K_{1,n}$ and $K_{n}$ the Star on $n+1$ vertices and complete graph on $n$ vertices respectively. A triangle of a graph $G$ is called odd if there is a vertex of $G$ adjacent to an odd number of its vertices.
\begin{lem}\label{L1} \cite{1}. A graph $G$ is a line graph if and only if $G$ does not have $K_{1,3}$ as an induced subgraph, and if two odd triangles have a common edge then the subgraph induced by their vertices is $K_{4}$.
\end{lem}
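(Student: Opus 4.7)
The plan is to prove the two directions separately. The necessity (``only if'') direction is a direct structural argument about the edges of $L(H)$, whereas the sufficiency (``if'') direction is the classical Van Rooij--Wilf theorem, which one proves by reconstructing a root graph from a suitable clique cover.

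For necessity, suppose $G=L(H)$. First, a pigeonhole argument shows that four edges $e,e_1,e_2,e_3$ of $H$ cannot induce a claw in $L(H)$ with centre $e$: each $e_i$ must share an endpoint with $e$, so by pigeonhole two of the $e_i$ meet the same endpoint of $e$, making them adjacent in $L(H)$ and contradicting the induced $K_{1,3}$. For the odd-triangle condition I would classify triangles of $L(H)$ into two types: an $S$-triangle, consisting of three edges of $H$ sharing a common endpoint, and a $T$-triangle, consisting of the three edges of a triangle of $H$. A short case analysis shows that every external edge $f$ of $H$ meets a $T$-triangle in exactly $0$ or $2$ of its edges, so no $T$-triangle can be odd; hence every odd triangle of $L(H)$ is necessarily an $S$-triangle. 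If two odd triangles share a common edge of $L(H)$, that shared edge corresponds to two edges $f,g$ of $H$ meeting at a unique endpoint $v_0$, and each of the two $S$-triangles must then consist of three edges of $H$ through $v_0$. The four edges altogether pairwise share $v_0$, and hence induce $K_4$ in $L(H)$.

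For sufficiency I would follow the Krausz / Van Rooij--Wilf strategy: assume $G$ is claw-free and satisfies the odd-triangle condition, and build a graph $H$ with $L(H)\cong G$. The central step is to exhibit a \emph{clique edge cover} of $G$, i.e.\ a family $\{\mathcal{C}_1,\dots,\mathcal{C}_t\}$ of cliques of $G$ whose edge sets partition $E(G)$ and such that every vertex of $G$ lies in at most two of the $\mathcal{C}_i$. One then takes the $\mathcal{C}_i$ (augmented by singleton endpoints for vertices contained in only one clique) as the vertices of $H$ and declares each vertex $v$ of $G$ to be the edge of $H$ joining the (one or two) cliques containing it; the verification $L(H)\cong G$ is then routine. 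The main obstacle is producing such a clique cover: starting from the family of maximal cliques of $G$ and refining it, one must rule out the configuration in which a vertex is forced into three pairwise-intersecting maximal cliques, which is exactly what the odd-triangle hypothesis prevents. Claw-freeness guarantees that any two maximal cliques of $G$ meet in at most one vertex, and the $K_4$-condition on odd triangles provides precisely the local compatibility needed to realise two star-triangles sharing an edge as triangles at a single vertex of $H$ rather than as an unresolvable obstruction.
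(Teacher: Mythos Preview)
The paper does not prove this lemma at all; it is quoted from the reference \cite{1} and used as a black box in the proof of Theorem~\ref{t3}. There is therefore no paper-proof to compare against, and your outline already goes well beyond what the paper supplies.

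Your necessity direction is correct: the pigeonhole argument for claw-freeness is standard, and the $S$-/$T$-triangle dichotomy together with the observation that every vertex of $L(H)$ is adjacent to an even number of vertices of a $T$-triangle is exactly the right mechanism to force two odd triangles sharing an edge to sit over a common star vertex of $H$.

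The sufficiency sketch follows the right Van Rooij--Wilf/Krausz template, but one assertion is false as stated: claw-freeness alone does \emph{not} guarantee that two maximal cliques meet in at most one vertex. Take $K_4$ minus an edge; it is claw-free (indeed it is $L(K_{1,3}+e)$, with $e$ joining two leaves), yet its two maximal triangles intersect in two vertices. So one cannot simply take the maximal cliques as the Krausz cover; one must genuinely split or reassign edges lying in overlapping maximal cliques, and it is in making these local choices consistently that \emph{both} hypotheses are used together. Your phrase ``starting from the family of maximal cliques and refining it'' points in the right direction, but the subsequent sentence misattributes the single-vertex-intersection property to claw-freeness. Correct that attribution and indicate how the odd-triangle condition resolves the ambiguous overlaps, and the outline stands.
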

\section{Comparison between GA index and ABC index}

If the graph $G$ has $s\geq2$ components $G_{1},G_{2},\ldots,G_{s}$, then from the definition of the $GA$ and $ABC$ indices it follows that $ABC(G)=\sum_{i=1}^{s}ABC(G_{i})$ and $GA(G)=\sum_{i=1}^{s}GA(G_{i})$. Moreover, if the graph $G$ is trivial then $ABC(G)=GA(G)=0=ABC(P_{2})$. Hence it is enough to restrict our considerations to non-trivial and connected graphs only. Denoted by $T^{*}$ the tree on eight vertices, obtained by joining the central vertices of two copies $K_{1,3}$ by an edge (see Figure 1). To prove the first main theorem of this section, we need the following known result:

\begin{thm}\label{t1}
\cite{18}.
Let G be a non-trivial and connected graph with maximum degree $\Delta$ and minimum degree $\delta$. If $\Delta-\delta\leq3$ and $G\ncong K_{1,4}, T^{*}$, then $GA(G)>ABC(G)$.
\end{thm}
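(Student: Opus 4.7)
The plan is to show that $GA(G) - ABC(G) = \sum_{ij \in E(G)} \phi(d_i, d_j) > 0$, where
\[
\phi(x,y) = \frac{2\sqrt{xy}}{x+y} - \sqrt{\frac{x+y-2}{xy}}.
\]
Squaring reduces the positivity of $\phi(x,y)$ to the polynomial inequality $4x^2y^2 > (x+y)^2(x+y-2)$. A direct check on the integer pairs $(x,y)$ with $1 \leq x \leq y$ and $y - x \leq 3$ shows that $\phi(x,y) > 0$ in every such case \emph{except} $(x,y) = (1,4)$, where $\phi(1,4) = \frac{4}{5} - \frac{\sqrt{3}}{2} < 0$. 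Under the hypothesis $\Delta - \delta \leq 3$, the bad pair $(1,4)$ can arise only when $\delta = 1$ and $\Delta = 4$, and then only on a pendant edge incident to a vertex of degree $4$.

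This observation immediately disposes of the easy cases $\delta \geq 2$, and $\delta = 1$ with $\Delta \leq 3$: every edge carries strictly positive $\phi$, so $GA(G) > ABC(G)$ follows at once.

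The substantive case is $\delta = 1$ and $\Delta = 4$. For each $v$ with $d_v = 4$, let $p_v$ denote its number of pendant neighbors. Connectedness of $G$ together with $G \not\cong K_{1,4}$ forces $p_v \leq 3$. I would group the edges of $G$ by whether they meet $V_4 := \{v : d_v = 4\}$: edges disjoint from $V_4$ have both endpoints of degree at most $3$ and therefore contribute $\phi > 0$. For edges meeting $V_4$, I would attribute each pendant edge to its unique degree-$4$ endpoint and split each edge joining two vertices of $V_4$ equally between them. The share of $v \in V_4$ then becomes
\[
p_v \phi(1,4) + \sum_{u \sim v,\ d_u \in \{2,3\}} \phi(4, d_u) + \frac{1}{2}|N(v) \cap V_4| \cdot \phi(4,4),
\]
and routine numerical evaluation using $\phi(1,4) \approx -0.066$, $\phi(4,2) \approx 0.236$, $\phi(4,3) \approx 0.344$ and $\phi(4,4) \approx 0.388$ shows that this share is strictly positive unless $p_v = 3$ and $v$ has a $V_4$-neighbor.

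The main obstacle is this last sub-configuration: a pair of adjacent vertices $u, v \in V_4$ with $p_u = p_v = 3$ yields a joint share $6 \phi(1,4) + \phi(4,4) \approx -0.008 < 0$. I would then argue that this sub-configuration forces $G \cong T^*$: each of $u$ and $v$ already uses all four incident slots on three pendant neighbors and on one another, so connectedness of $G$ leaves no room for any further vertex, and $G$ is exactly the eight-vertex tree $T^*$. In every other situation, at most one of any pair of adjacent degree-$4$ vertices can have $p = 3$, and the share of the neighboring degree-$4$ vertex with $p \leq 2$ is large enough (at least $\approx 0.256$) to absorb the small deficits of its $p = 3$ partners, forcing the total sum to be strictly positive.
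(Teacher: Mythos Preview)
The paper does not prove this statement: Theorem~\ref{t1} is quoted from Das and Trinajsti\'{c} as a known result and is used only as an input to the proof of Theorem~\ref{t3}. There is therefore no proof in the paper to compare your attempt against.

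On its own merits, your argument is essentially correct. The edge-by-edge sign analysis is right: among all integer pairs $(x,y)$ with $1\le x\le y$ and $y-x\le 3$, the only one with $\phi(x,y)<0$ is $(1,4)$, so the only non-trivial case is $\delta=1$, $\Delta=4$. Your local redistribution around $V_4$ then handles that case, and the identification of $T^{*}$ as the unique obstruction when two adjacent degree-$4$ vertices each carry three pendants is correct.

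Two places would benefit from more detail in a written proof. First, your ``direct check'' of $4x^2y^2>(x+y)^2(x+y-2)$ is a claim about infinitely many integer pairs, not a finite table; you should record the short polynomial verification (substitute $y=x+c$ for $c\in\{0,1,2,3\}$ and check that the resulting quartic in $x$ is positive for all $x\ge 2$, then treat the four pairs with $x=1$ by hand). Second, the absorption step is the heart of the argument and is stated too tersely. You should make explicit that each $v\in V_4$ with $p_v=3$ and a $V_4$-neighbour has a \emph{unique} such neighbour $w$, that (excluding $T^{*}$) necessarily $p_w\le 2$, and that $w$ can be the target of at most $4-p_w$ such vertices; then verify that
\[
p_w\,\phi(1,4)+(4-p_w)\cdot\tfrac{1}{2}\phi(4,4)\;+\;(4-p_w)\bigl(3\phi(1,4)+\tfrac{1}{2}\phi(4,4)\bigr)>0
\]
for each $p_w\in\{0,1,2\}$. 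This holds numerically (the left-hand side is approximately $0.758$, $0.503$, $0.247$ respectively), but since this is where the two exceptional graphs are actually ruled out, it should not be left implicit.
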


Let $m_{a,b}(G)$ be the number of edges of a graph $G$ connecting the vertices of degree $a$ and $b$. In the following theorem, we compare the $GA$ index and the $ABC$ index for line graph of a molecular graph:

\begin{thm}\label{t3}
Let $M$ be a molecular (connected) graph  with $n\geq3$ vertices and $G \cong L(M)$. Then $GA(G)>ABC(G)$.
\end{thm}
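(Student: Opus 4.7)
The plan is to analyze $GA(L(M)) - ABC(L(M))$ edge-by-edge using the per-edge contribution function
\[
\varphi(a,b) = \frac{2\sqrt{ab}}{a+b} - \sqrt{\frac{a+b-2}{ab}},
\]
so that $GA(L(M)) - ABC(L(M)) = \sum_{xy \in E(L(M))} \varphi(d_x, d_y)$. Since $M$ is molecular, each vertex of $L(M)$ corresponds to an edge $uv$ of $M$ and has $L(M)$-degree $d_M(u)+d_M(v)-2 \le 6$. A direct check over the finitely many integer pairs $1 \le a \le b \le 6$ shows $\varphi(a,b) < 0$ exactly for $(a,b)\in\{(1,4),(1,5),(1,6)\}$, and $\varphi(a,b) > 0$ otherwise.

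Next I would observe that in $L(M)$ the only realizable negative-contribution (``bad'') edge type is $(1,4)$. Indeed, a vertex of $L(M)$ has degree $1$ iff its corresponding edge $uv$ of $M$ satisfies $\{d_M(u),d_M(v)\}=\{1,2\}$; its unique $L(M)$-neighbor is the remaining edge $vw$ at the degree-$2$ endpoint $v$, whose $L(M)$-degree equals $d_M(w) \le 4$. Hence $(1,5)$ and $(1,6)$ cannot appear, and every bad edge arises from vertices $u,v,w$ of $M$ with $uv, vw \in E(M)$ and $d_M(u)=1$, $d_M(v)=2$, $d_M(w)=4$.

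For each such bad edge $ef$ (with $e=uv$ and $f=vw$), the vertex $f$ has degree $4$ in $L(M)$ and three further neighbors, namely the edges $wx_1,wx_2,wx_3$ at $w$; these yield three ``compensating'' edges of $L(M)$ with endpoint degrees $(4,\,2+d_M(x_i))$, where $2+d_M(x_i)\in\{3,4,5,6\}$. Numerical bounds give $|\varphi(1,4)| < 0.067$ and $\min\{\varphi(4,k):k\in\{3,4,5,6\}\} = \varphi(3,4) > 0.344$. A short case analysis confirms that any vertex of $L(M)$ can be the degree-$4$ endpoint of at most one bad edge (it is determined by the pendant neighbor of the associated degree-$2$ vertex of $M$), and consequently each compensating edge is shared among at most two bad edges.

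Charging each bad edge's deficit equally to its three compensating edges, each compensating edge absorbs at most $2\cdot 0.067/3 < 0.045$ in total, comfortably below its surplus of $0.344$. Therefore the combined contribution of bad and compensating edges is at least $\bigl(\tfrac{3}{2}\cdot 0.344 - 0.067\bigr)|B|>0$, where $|B|$ denotes the number of bad edges; every remaining edge contributes strictly positively; and $L(M)$ has at least one edge because $n\ge 3$ and $M$ is connected. Hence $GA(L(M)) > ABC(L(M))$. The main obstacle is the structural claim that every degree-$4$ vertex of $L(M)$ hosts at most one bad edge, a step that uses the molecular bound $d_M \le 4$ in a crucial way and requires carefully tracing how pendant edges of $M$ become degree-$1$ vertices of $L(M)$.
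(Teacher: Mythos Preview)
Your argument is correct, and it follows a genuinely different route from the paper's proof. The paper first disposes of the cases $n\le 4$ and $M\cong P_n$ by appealing to Theorem~\ref{t1}, then for the remaining cases uses the Beineke $K_{1,3}$-free characterization of line graphs (Lemma~\ref{L1}) to bound the number of pendant edges of $G$ by $\lfloor |E(G)|/2\rfloor$, and finally carries out a case analysis (involving the clique structure around high-degree vertices and the $(2,5),(2,6)$ edge types) to show the positive terms outweigh the negative ones. By contrast, you work directly with the degree formula $d_{L(M)}(uv)=d_M(u)+d_M(v)-2$ to pin down that the \emph{only} realizable negative pair is $(1,4)$ (ruling out $(1,5)$ and $(1,6)$, which the paper does not explicitly exclude), and then run a local discharging argument using the three $(4,k)$ edges at the degree-$4$ endpoint. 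Your approach is more elementary---it avoids Lemma~\ref{L1} and the clique case analysis entirely---and yields sharper structural information about where the deficit can occur; the paper's approach, on the other hand, is phrased more in terms of intrinsic line-graph properties and would adapt more readily if one relaxed the molecular bound on $M$.
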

 \begin{proof}
If $n\leq4$ or $M\cong P_{n}$, then it can be easily seen that $G$ is a molecular graph which satisfies the hypothesis of Theorem \ref{t1} and hence the result follows. Let us assume that $n\geq5$ and $M\not\cong P_{n}$. Note that $1\leq d_{i}\leq6$ for all vertices $v_{i}$ of $G$. Hence the edges of $G$ are of possible degree pairs: $(6,6),(6,5),(6,4),(6,3),(6,2),(6,1),(5,5),(5,4),(5,3),(5,2),(5,1),(4,4),(4,3),\\(4,2),(4,1),(3,3),(3,2),(3,1),(2,2),(2,1)$. The values of $\theta_{ij}=\frac{2\sqrt{d_{i}d_{j}}}{d_{i}+d_{j}}$ and $\phi_{ij}=\sqrt{\frac{d_{i}+d_{j}-2}{d_{i}d_{j}}}$ for all above mentioned degree pairs are given in the Table 1 and Table 2 (Table 2 is taken from \cite{18}). From these tables one can note easily that
\begin{equation}\label{a}
\theta_{ij}-\phi_{ij}
\begin{cases}
       \geq \frac{2\sqrt{6}}{7}-\sqrt{\frac{5}{6}}\approx-0.2130 & \text{if } (d_{i},d_{j})=(4,1),(5,1),(6,1)\\
       \approx0.0495 & \text{if } (d_{i},d_{j})=(3,1)\\
       \approx0.1589 & \text{if } (d_{i},d_{j})=(6,2)\\
       \approx0.1964 & \text{if }(d_{i},d_{j})=(5,2)\\
       \approx0.2357 & \text{if } (d_{i},d_{j})=(2,1),(4,2)\\
       \geq \frac{2\sqrt{6}}{5}-\frac{1}{\sqrt{2}}\approx0.2727 & \text{otherwise }
\end{cases}
\end{equation}

It is claimed that
\begin{equation}\label{a1}
\displaystyle\sum^{6}_{b=2}m_{1,b}(G)\leq \lfloor \frac{|V(G)|}{2}\rfloor\leq \lfloor \frac{|E(G)|}{2}\rfloor.
\end{equation}
The right inequality obviously holds because $G$ contains atleast one cycle. To prove the left inequality it is enough to show that no two pendent edges of $G$ are adjacent. Contrarily, suppose that $e_{1}=uv$ and $e_{2}=uw$ are pendent edges of $G$. Since $n\geq5$, order of $G$ is at least 4. This implies that there exists a vertex $t$ (different from $v,w$) adjacent with $u$ in $G$. Then the graph obtained by removing all vertices except $u, v, w, t$ of $G$ is $K_{1,3}$ , a contradiction to the Lemma \ref{L1}. Now, we consider two cases:

\textit{Case 1.} If $m_{1,b}(G)=0$ for all $b\geq4$, then it follows from (\ref{a}) that $\theta_{ij}-\phi_{ij}>0$ for all edges $ij\in G$ and hence $GA(G)>ABC(G)$.

\textit{Case 2.} If $m_{1,b}(G)=0$ not for all $b\geq4$.\\
If $m_{2,5}(G)=m_{2,6}(G)=0$, then from (\ref{a}) and (\ref{a1}),  it follows that
\begin{equation}\label{b}
GA(G)-ABC(G)=\sum_{ij\in E(G)}(\theta_{ij}-\phi_{ij})>0
\end{equation}
\footnotesize

\begin{table}[h]
\begin{tabular}{|c|c|c|c|c|c|c|c|c|c|c|c|c|c|c|c|c|c|c|c|c|}\hline%
$(d_{i},d_{j})$&(6,6)&(6,5)&(6,4)&(6,3)&(6,2)&(6,1)&(5,5)&(5,4)&(5,3)&(5,2)&(5,1)\\\hline%
$\theta_{ij}$&1&$\frac{2\sqrt{30}}{11}$&$\frac{2\sqrt{6}}{5}$&$\frac{2\sqrt{2}}{3}$&$\frac{\sqrt{3}}{2}$&$\frac{2\sqrt{6}}{7}$
&1&$\frac{4\sqrt{5}}{9}$&$\frac{\sqrt{15}}{4}$&$\frac{2\sqrt{10}}{7}$&$\frac{\sqrt{5}}{3}$\\
$\phi_{ij}$&$\frac{1}{3}\sqrt{\frac{5}{2}}$&$\sqrt{\frac{3}{10}}$&$\frac{1}{\sqrt{3}}$&$\frac{1}{3}\sqrt{\frac{7}{2}}$&$\frac{1}{\sqrt{2}}$
&$\sqrt{\frac{5}{6}}$&$\frac{2\sqrt{2}}{5}$&$\frac{1}{2}\sqrt{\frac{7}{5}}$&$\sqrt{\frac{2}{5}}$&$\frac{1}{\sqrt{2}}$
&$\frac{2}{\sqrt{5}}$\\\hline
\end{tabular}
\caption{Values of $\theta_{ij}$ and $\phi_{ij}$ for all edges with degrees $(d_{i},d_{j})$ where $5\leq d_{i}\leq6$ and $d_{i}\geq d_{j}$}
\end{table}
\begin{table}[h]
\begin{center}
\begin{tabular}{|c|c|c|c|c|c|c|c|c|c|c|c|c|c|c|c|c|c|c|c|c|}\hline%
$(d_{i},d_{j})$&(4,4)&(4,3)&(4,2)&(4,1)&(3,3)&(3,2)&(3,1)&(2,2)&(2,1)\\\hline%
$\theta_{ij}$&1&$\frac{4\sqrt{3}}{7}$&$\frac{2\sqrt{2}}{3}$&0.8&1&$\frac{2\sqrt{6}}{5}$&$\frac{\sqrt{3}}{2}$
&1&$\frac{2\sqrt{2}}{3}$\\
$\phi_{ij}$&$\frac{1}{2}\sqrt{\frac{3}{2}}$&$\frac{1}{2}\sqrt{\frac{5}{3}}$&$\frac{1}{\sqrt{2}}$&$\frac{\sqrt{3}}{2}$&$\frac{2}{3}$
&$\frac{1}{\sqrt{2}}$&$\sqrt{\frac{2}{3}}$&$\frac{1}{\sqrt{2}}$&$\frac{1}{\sqrt{2}}$\\\hline
\end{tabular}
\end{center}
\caption{Values of $\theta_{ij}$ and $\phi_{ij}$ for all edges with degrees $(d_{i},d_{j})$ where $2\leq d_{i}\leq4$ and $d_{i}\geq d_{j}$}
\end{table}

\normalsize

If at least one of $m_{2,5}(G),m_{2,6}(G)$ is nonzero. Consider the edge $e=xy\in G$ where degree of $x$ and $y$ is two and $c$ $(c=5, 6)$ respectively. Let $l$ denote number of vertices of degree two which are adjacent with $y$. Then $1\leq l\leq2$ for otherwise $K_{1,3}$ would be an induced subgraph of $G$. Note that the vertex $y$ lies on either of the cliques $K_{c-1}, K_{c}$ of $G$ %(Observe that no three vertices of $N_{G}(y)$ are pairwise non-adjacent, for otherwise $K_{1,3}$ is an induced subgraph of $G$. If $x$ is not adjacent with any vertex of $N_{G}(y)\setminus\{x\}\}$, then the subgraph induced by the vertices of $\left(N_{G}(y)\setminus\{x\}\right)\cup\{y\}\}$ is $K_{c}$. If $x$ is adjacent with $z\in N_{G}(y)\setminus\{x\}\}$, then the subgraph induced by the vertices of $\left(N_{G}(y)\setminus\{x,z\}\right)\cup\{y\}\}$ is $K_{c-1}$.)
and hence the edges with possible degree pairs of these cliques in $G$ are $(6,6),(6,5),(6,4),(6,3),(5,5),(5,4),(5,3),(4,4),\\(4,3),(3,3)$. For all these degree pairs $\theta_{ij}-\phi_{ij}\geq0.2727$. Moreover, corresponding to every clique $K_{d}$ $(d=4, 5, 6)$ of $G$, there exist at most $2d$ edges with degree pairs $(2,c)$ in $G$, where vertex of degree $c$ (that is $y$) lies on $K_{d}$. Since the size of $K_{d}$ is $\frac{d(d-1)}{2}\geq 2d$ for $d\geq5$. Therefore, if $G$ does not have clique $K_{4}$, then by using (\ref{a}) and (\ref{a1}) one can easily see that the inequality (\ref{b}) holds. If $G$ has clique $K_{4}$. It can be easily seen that no edge with degree pairs $(2,6)$ can be incident with any vertex of the clique $K_{4}$. This implies, corresponding to every clique $K_{4}$ there exist at most 8 edges with degree pairs $(2,5)$, but the size of $K_{4}$ is 6. Hence
\[\frac{8(0.1964)+6(0.2727)}{14}\approx 0.2291>0.2130.\]
This completes the proof.
\end{proof}

Now, we prove that conclusion of Theorem 3.2 remains true if minimum degree is $k\geq2$ and the difference between maximum and minimum degree is less than or equal to $(2k-1)^{2}$. To proceed, we need the following lemma:

\begin{lem}\label{L3}
 If $f(x,y)=(x+y)^{2}x^{2}-(x+\frac{y}{2})^{2}(2x+y-2)$ , $k\leq x\leq k+(2k-1)^{2}$ and $0\leq y\leq (2k-1)^{2}$ where $k\geq2$
then $f(x,y)>0$.
\end{lem}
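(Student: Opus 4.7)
The plan is to reduce the two-variable positivity to a one-variable problem by first showing that $f$ is monotone in $x$ on the given rectangle. A direct differentiation factors cleanly as
\[
\frac{\partial f}{\partial x}(x,y) \;=\; (2x+y)\Bigl[\,2x^{2}+2xy-3x-\frac{3y}{2}+2\,\Bigr].
\]
Since $x\geq k\geq 2$ and $y\geq 0$, the first factor is positive; in the bracket, the $y$-free piece $2x^{2}-3x+2$ has negative discriminant and is strictly positive, while the remaining piece $y(2x-\frac{3}{2})$ is nonnegative. Hence $f$ is strictly increasing in $x$ on the given region (so the upper bound on $x$ is not even needed), and it suffices to prove $g(y):=f(k,y)>0$ for $y\in[0,(2k-1)^{2}]$.

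Expanding $g$ as a cubic in $y$ gives
\[
g(y) \;=\; k^{2}\bigl((k-1)^{2}+1\bigr)\;+\;k(2k^{2}-3k+2)\,y\;+\;\frac{(2k-1)(k-1)}{2}\,y^{2}\;-\;\frac{1}{4}y^{3}.
\]
For $k\geq 2$, the coefficients of $1,\,y,\,y^{2}$ are all strictly positive while the leading coefficient is $-\frac{1}{4}$. The derivative $g'(y)$ is therefore a downward-opening quadratic with $g'(0)>0$ and a negative product of roots, so $g'$ is positive on an interval $[0,y_{+})$ and negative thereafter. Consequently $g$ increases then decreases on $[0,\infty)$, which forces the minimum on the compact interval $[0,(2k-1)^{2}]$ to occur at one of its endpoints. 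At the left endpoint, $g(0)=k^{2}((k-1)^{2}+1)>0$ is immediate.

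The remaining task, which I expect to be the main obstacle, is showing $g((2k-1)^{2})>0$. Substitution gives
\[
4\,g\bigl((2k-1)^{2}\bigr) \;=\; 4k^{2}(4k^{2}-3k+1)^{2}\;-\;(4k^{2}-2k+1)^{2}(4k^{2}-2k-1),
\]
a difference of degree-$6$ polynomials in $k$. The key observation is the difference-of-squares identity $(4k^{2}-2k+1)(4k^{2}-2k-1)=(4k^{2}-2k)^{2}-1$, which exposes a large cancellation so that all terms of degree $\geq 5$ in $k$ vanish and the expression collapses to the manifestly positive polynomial $4k^{4}+4k^{2}-2k+1$ (note $4k^{2}-2k+1=(2k-\frac{1}{2})^{2}+\frac{3}{4}>0$, so in particular the full quartic is positive for every real $k$). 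This closes the proof.
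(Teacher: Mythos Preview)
Your proof is correct and follows essentially the same two-step strategy as the paper: establish monotonicity in $x$ via the factored derivative $(2x+y)\bigl[2x^{2}+2xy-3x-\tfrac{3}{2}y+2\bigr]$, then reduce to the endpoint analysis of the cubic $g(y)=f(k,y)$, which is shown to be unimodal on $[0,\infty)$ and hence minimized at an endpoint of $[0,(2k-1)^{2}]$. The only cosmetic differences are the order of the two steps and that you supply the explicit algebra (via the difference-of-squares identity) yielding $4g\bigl((2k-1)^{2}\bigr)=4k^{4}+4k^{2}-2k+1$, whereas the paper simply records the equivalent value $g\bigl((2k-1)^{2}\bigr)=k^{4}+k^{2}-\tfrac{k}{2}+\tfrac{1}{4}$.
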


\begin{proof}
 \textbf{Step 1.} Firstly we take $x=k$, then
\[g(y)=f(k,y)=(k+y)^{2}k^{2}-(k+\frac{y}{2})^{2}(2k+y-2)\]
and $g'(y)>0$ implies that $\alpha<y<\beta$, where
\[\alpha=\frac{2}{3}(1-3k+2k^{2})-\frac{2}{3}\sqrt{1+4k^{2}-6k^{3}+4k^{4}}\]
and
\[\beta=\frac{2}{3}(1-3k+2k^{2})+\frac{2}{3}\sqrt{1+4k^{2}-6k^{3}+4k^{4}}.\]
It means that $g$ is increasing in the interval $(\alpha,\beta)$ and decreasing in the intervals $(-\infty,\alpha)$ and $(\beta,\infty)$.
Since $\alpha<0<\beta<(2k-1)^{2}$, so $g$ is increasing in $(0,\beta)$ and decreasing in $(\beta,(2k-1)^{2})$.
Moreover, $g(0)=k^{2}\{k^{2}-2(k-1)\}>0$ and $g((2k-1)^{2})=k^{4}+k^{2}-\frac{k}{2}+\frac{1}{4}>0$. It follows that $g(y)>0$ for all $y\in[0,(2k-1)^{2}]$.\\
\\\textbf{Step 2.} Now we take $y=y_{0}$, where $y_{0}$ is any fixed integer in the interval $[0,(2k-1)^{2}]$.
Let $h(x)=f(x,y_{0})$ then
\[h'(x)=(2x+y_{0})\left[(2x^{2}+2-3x)+\left(2x-\frac{3}{2}\right)y_{0}\right]>0\]
for all $x\geq k\geq 2$. Hence $h(x)=f(x,y_{0})$ is increasing in $[k,\infty)$. Combining both the results proved in Step 1 and Step 2, we have the lemma.
\end{proof}

\begin{thm}\label{t5}
Let $G$ be a connected graph with maximum degree $\Delta$ and minimum degree $\delta\geq2$. If $\Delta-\delta\leq(2\delta-1)^{2}$ then $GA(G)>ABC(G)$.
  \end{thm}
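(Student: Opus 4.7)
The plan is to reduce the theorem to an edgewise inequality and then invoke Lemma \ref{L3}. Specifically, since
$$GA(G) - ABC(G) = \sum_{ij \in E(G)} (\theta_{ij} - \phi_{ij}),$$
it suffices to show $\theta_{ij} > \phi_{ij}$ for every edge $ij \in E(G)$.

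For a fixed edge, set $x = \min\{d_i, d_j\}$ and $y = |d_i - d_j|$, so that $d_i + d_j = 2x + y$ and $d_i d_j = x(x+y)$. Since both $\theta_{ij}$ and $\phi_{ij}$ are positive (note $2x + y - 2 \geq 2\delta - 2 \geq 2 > 0$), squaring and clearing denominators gives
$$\theta_{ij}^2 - \phi_{ij}^2 \;=\; \frac{4x(x+y)}{(2x+y)^2} - \frac{2x+y-2}{x(x+y)} \;=\; \frac{4\bigl[x^2(x+y)^2 - (x + y/2)^2(2x+y-2)\bigr]}{(2x+y)^2\, x(x+y)} \;=\; \frac{4\, f(x,y)}{(2x+y)^2\, x(x+y)},$$
where $f$ is exactly the polynomial from Lemma \ref{L3}. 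Since the denominator is positive, the inequality $\theta_{ij} > \phi_{ij}$ is equivalent to $f(x,y) > 0$.

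It remains to check that the hypotheses of Lemma \ref{L3} are satisfied with $k = \delta$. Because $d_i, d_j \in [\delta, \Delta]$, we have $\delta \leq x \leq \Delta$ and $0 \leq y \leq \Delta - \delta$; combined with the assumption $\Delta - \delta \leq (2\delta-1)^2$, this yields $\delta \leq x \leq \delta + (2\delta-1)^2$ and $0 \leq y \leq (2\delta-1)^2$, exactly the range covered by the lemma. Applying Lemma \ref{L3} therefore gives $f(x,y) > 0$ edgewise, and summing produces $GA(G) > ABC(G)$.

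There is no real obstacle beyond the algebraic identification above: the hard analytic work (monotonicity of $f$ in $x$ and $y$, and sign-checking at the endpoints) has already been absorbed into Lemma \ref{L3}. The only thing to be careful about is the direction of the substitution — writing $d_i, d_j$ in terms of $(x,y)$ rather than $(x, x+y)$ directly — so that the polynomial produced matches the one in the lemma verbatim.
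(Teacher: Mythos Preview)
Your proof is correct and follows essentially the same route as the paper: reduce $GA(G)>ABC(G)$ to the edgewise inequality $\theta_{ij}>\phi_{ij}$, substitute $x=\min\{d_i,d_j\}$ and $y=|d_i-d_j|$ so that the squared difference is a positive multiple of $f(x,y)$, and then invoke Lemma~\ref{L3} with $k=\delta$. The paper's version is terser (it works directly with $\Gamma=d_i^2d_j^2-\tfrac14(d_i+d_j)^2(d_i+d_j-2)$ rather than spelling out $\theta_{ij}^2-\phi_{ij}^2$), but the substance is identical.
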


\begin{proof}
 Let us consider the quantity
\begin{equation}\label{1}
\Gamma=d_{i}^{2}d_{j}^{2}-\frac{1}{4}(d_{i}+d_{j})^{2}(d_{i}+d_{j}-2),
\end{equation}
 where $d_{i}$ and $d_{j}$ are the degrees of vertices $v_{i}$ and $v_{j}$ respectively in $G$. Since $\delta \leq d_{i},d_{j}\leq\Delta\leq \delta+(2\delta-1)^{2}$ this implies that $\mid d_{i}-d_{j}\mid\leq(2\delta-1)^{2}$. Without loss of generality we can suppose that $d_{i}\geq d_{j}$ then $d_{i}= d_{j}+\theta$ for some $\theta$ ; $0\leq\theta\leq (2\delta-1)^{2}$ and Eq.(\ref{1}) becomes
 \begin{equation}\label{2}
\Gamma=\left(d_{j}+\theta\right)^{2}d_{j}^{2}-\left(d_{j}+\frac{\theta}{2}\right)^{2}(2d_{j}+\theta-2),
\end{equation}
where $\delta\leq d_{j}\leq \delta+(2\delta-1)^{2}$ and $0\leq \theta\leq (2\delta-1)^{2}$. Now, from Lemma \ref{L3} and Eq.(\ref{2}), we have the desired result.
\end{proof}

If the condition $\Delta-\delta\leq(2\delta-1)^{2}$ is replaced by $\Delta-\delta\leq(2\delta-1)^{2}+1$ in Theorem \ref{t5}, then the conclusion may not be true. For instance, consider the complete bipartite graph $K_{r,s}$, if we take $r=\delta\geq2$ and $s=(2\delta-1)^{2}+\delta+1$ then

\begin{eqnarray}\nonumber
GA(K_{r,s})&=&\frac{2\left[\delta((2\delta-1)^{2}+\delta+1)\right]^{\frac{3}{2}}}{{(2\delta-1)^{2}+2\delta+1}}\\\nonumber
&<&\sqrt{\delta((2\delta-1)^{2}+2\delta-1)((2\delta-1)^{2}+\delta+1)}= ABC(K_{r,s}).
\end{eqnarray}
On the other hand, consider the graph $G$ obtained by joining any vertex of $K_{12}$ to a vertex of $K_{3}$ by
an edge. Then $\Delta=12,\delta=2$ which means that $\Delta-\delta=10>(2(2)-1)^{2}$, but $GA(G)>ABC(G)$. We have the following result:

\begin{thm}
If $G$ is a connected graph with minimum degree $\delta\geq2$ and $| d_{i}-d_{j}|\leq(2\delta-1)^{2}$ for all edges $ij\in E(G)$, then $GA(G)>ABC(G)$.
\end{thm}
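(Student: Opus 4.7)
The plan is to imitate the argument of Theorem \ref{t5}, but apply it \emph{locally}, edge by edge, instead of globally. The point is that the conclusion of Lemma \ref{L3} only requires two things on the pair $(x,y)$: that $x\geq k$ and that $0\leq y\leq(2k-1)^{2}$. In fact, Step 2 of the proof of Lemma \ref{L3} shows that $h(x)=f(x,y_{0})$ is increasing on $[k,\infty)$, so combined with Step 1, we actually have $f(x,y)>0$ for every $x\geq k$ and $0\leq y\leq(2k-1)^{2}$, with no upper bound on $x$ needed. This extended version is exactly what is required here because the local hypothesis controls only the difference of degrees, not the maximum degree $\Delta$.

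Concretely, I would fix an edge $ij\in E(G)$ and, without loss of generality, assume $d_{i}\geq d_{j}\geq\delta$. Setting $\theta=d_{i}-d_{j}$, the hypothesis $|d_{i}-d_{j}|\leq(2\delta-1)^{2}$ gives $0\leq\theta\leq(2\delta-1)^{2}$. As in the proof of Theorem \ref{t5}, define
\[
\Gamma_{ij}=d_{i}^{2}d_{j}^{2}-\tfrac{1}{4}(d_{i}+d_{j})^{2}(d_{i}+d_{j}-2)=f(d_{j},\theta),
\]
with $f$ as in Lemma \ref{L3} and $k=\delta\geq 2$. Since $d_{j}\geq\delta$ and $\theta\in[0,(2\delta-1)^{2}]$, the extended version of Lemma \ref{L3} yields $\Gamma_{ij}>0$.

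Next I would translate $\Gamma_{ij}>0$ into an edgewise comparison of $\theta_{ij}$ and $\phi_{ij}$. A direct computation gives
\[
\theta_{ij}^{2}-\phi_{ij}^{2}=\frac{4 d_{i}d_{j}}{(d_{i}+d_{j})^{2}}-\frac{d_{i}+d_{j}-2}{d_{i}d_{j}}=\frac{4\,\Gamma_{ij}}{d_{i}d_{j}(d_{i}+d_{j})^{2}},
\]
so $\Gamma_{ij}>0$ forces $\theta_{ij}>\phi_{ij}$ (both quantities being positive). Summing over all $ij\in E(G)$ gives $GA(G)>ABC(G)$.

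I do not anticipate a serious obstacle: the only subtlety is the observation that Lemma \ref{L3}, as literally stated, bounds $x$ by $k+(2k-1)^{2}$, while the present theorem needs $x$ to range over $[\delta,\infty)$. However this is already a consequence of Step 2 of the proof of Lemma \ref{L3}, so one just needs to point to that monotonicity argument. Everything else is bookkeeping identical to the proof of Theorem \ref{t5}.
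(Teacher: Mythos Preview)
Your argument is correct and is exactly the approach the paper intends: the paper's own proof is omitted with the remark that it is ``similar to the proof of Theorem \ref{t5},'' and you have written out precisely that edge-by-edge adaptation. Your observation that Lemma \ref{L3} in fact yields $f(x,y)>0$ for all $x\geq k$ (not just $x\leq k+(2k-1)^{2}$), by virtue of the monotonicity in Step 2, is the one point that needs to be made explicit, and you handle it correctly.
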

\begin{proof}
The proof is similar to the proof of Theorem \ref{t5} and hence is omitted.
\end{proof}

A stronger version of the above result can be analogously proved:
 \begin{thm}
Let $G$ be a connected graph with minimum degree $\delta\geq2$ and $| d_{i}-d_{j}|\leq(2k-1)^{2}$ for all edges $ij\in E(G)$, where $k=\min\{d_{i},d_{j}\}$. Then $GA(G)>ABC(G)$.
\end{thm}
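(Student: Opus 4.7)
The plan is to mirror the proof of Theorem \ref{t5}, but to apply Lemma \ref{L3} edge-by-edge rather than globally. The strength of the local hypothesis $|d_i - d_j| \leq (2k-1)^2$ (where $k = \min\{d_i,d_j\}$ may vary from edge to edge) is that a graph can satisfy it even when $\Delta - \delta$ is enormous, provided vertices of very different degrees are never adjacent. Since the positivity criterion of Lemma \ref{L3} is applied pointwise in the argument for Theorem \ref{t5}, localising the hypothesis to each edge should introduce no genuinely new technical difficulty.

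First, I would reduce $GA(G) > ABC(G)$ to an edge-wise positivity statement. As in Eq.~(\ref{1}), for an edge $ij$ set
\[\Gamma = d_i^2 d_j^2 - \tfrac{1}{4}(d_i+d_j)^2(d_i+d_j-2).\]
A direct squaring argument shows that $\theta_{ij} > \phi_{ij}$ is equivalent to $\Gamma > 0$, so it suffices to establish $\Gamma > 0$ for every edge of $G$ and then sum.

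Next, I would fix an arbitrary edge $ij$ and, without loss of generality, set $d_j = \min\{d_i, d_j\} =: k$, so that $k \geq \delta \geq 2$. By hypothesis, $d_i = k + \theta$ with $0 \leq \theta \leq (2k-1)^2$. Substituting into the displayed expression for $\Gamma$ gives $\Gamma = f(k, \theta)$ in the notation of Lemma \ref{L3}. The choice $x = k$ lies in $[k, k+(2k-1)^2]$, the value $y = \theta$ lies in $[0, (2k-1)^2]$, and $k \geq 2$, so Lemma \ref{L3} applies and yields $\Gamma > 0$. Summing $(\theta_{ij} - \phi_{ij}) > 0$ over all edges then produces $GA(G) - ABC(G) > 0$.

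The only subtlety, and hardly an obstacle, is that the parameter $k$ of Lemma \ref{L3} now depends on the edge, whereas in Theorem \ref{t5} it was the fixed global minimum $\delta$. The lemma's requirement $k \geq 2$ is nonetheless guaranteed because the edge-wise minimum degree is bounded below by $\delta \geq 2$. This localisation is exactly what makes the present result stronger than Theorem \ref{t5}, which is recovered by taking $k = \delta$ uniformly across all edges.
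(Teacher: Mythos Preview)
Your proposal is correct and is precisely the analogous argument the paper has in mind: apply Lemma~\ref{L3} edge by edge with $x=d_j=k=\min\{d_i,d_j\}$ and $y=\theta=d_i-d_j\in[0,(2k-1)^2]$, noting that $k\geq\delta\geq2$ secures the lemma's hypothesis for every edge. The paper itself omits the proof, merely remarking that it is proved analogously to Theorem~\ref{t5}; your write-up fills that in exactly as intended.
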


The current authors recently derived the following relation between $GA$ index and $ABC$ index:

\begin{thm}\cite{21}
Let $G$ be a connected graph and minimum degree $\delta\geq2$, then
\[\frac{\sqrt{2(n-2)}}{n-1}GA(G)\leq ABC(G)\leq\frac{n+1}{4\sqrt{n-1}}GA(G),\]
with left equality if and only if $G\cong K_{n}$ and right equality if and only if $G\cong C_{3}$.
\end{thm}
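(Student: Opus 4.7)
The plan is to establish both inequalities edgewise: I would show that for every edge $ij \in E(G)$ (where $2 \leq d_i, d_j \leq n-1$ by the hypothesis $\delta \geq 2$),
\[\frac{\sqrt{2(n-2)}}{n-1}\cdot\frac{2\sqrt{d_i d_j}}{d_i+d_j}\;\leq\;\sqrt{\frac{d_i+d_j-2}{d_i d_j}}\;\leq\;\frac{n+1}{4\sqrt{n-1}}\cdot\frac{2\sqrt{d_i d_j}}{d_i+d_j},\]
and then sum over $E(G)$. Since both sides of each edgewise inequality are positive, I may square to reduce matters to polynomial inequalities on the box $[2,n-1]^2$ that can be handled by AM--GM and one-variable calculus.

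For the left bound, I would first apply $(d_i+d_j)^2\geq 4d_i d_j$ to reduce to the linear inequality $(n-1)^2(d_i+d_j-2)\geq 2(n-2)\,d_i d_j$. Viewing the difference as a linear function of $d_i$ for fixed $d_j$, the values at the endpoints $d_i=2$ and $d_i=n-1$ are $d_j(n-3)^2$ and $(n-1)(n-3)(n-1-d_j)$, respectively, both manifestly non-negative. Equality throughout forces $d_i = d_j = n-1$ for every edge, so $G$ is $(n-1)$-regular and $G\cong K_n$.

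For the right bound, set $\phi(x,y)=(x+y)^2(x+y-2)/(x^2y^2)$; by symmetry I may assume $x\leq y$ and aim to show $\phi\leq(n+1)^2/(4(n-1))$ on $\{(x,y):2\leq x\leq y\leq n-1\}$. Computing $\partial_x \log\phi=\frac{2}{x+y}+\frac{1}{x+y-2}-\frac{2}{x}$ and clearing denominators reduces $\partial_x\phi\leq 0$ to the quadratic inequality $x^2-xy-2y(y-2)\leq 0$, whose positive root $x_+=\tfrac{1}{2}(y+\sqrt{y(9y-16)})$ satisfies $x_+\geq y$ precisely when $y\geq 2$. Hence $\phi(\cdot,y)$ is non-increasing on $[2,y]$ and attains its maximum at $x=2$; and since $\phi(2,y)=(y+2)^2/(4y)$ is increasing in $y\geq 2$, it reaches its overall maximum $(n+1)^2/(4(n-1))$ at $y=n-1$, giving the claim. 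Equality forces $\{d_i,d_j\}=\{2,n-1\}$ on every edge; a short structural check shows this is impossible for $n>3$ (the unique degree-$(n-1)$ vertex would have only degree-$2$ neighbours, but each such neighbour's second edge would then require another degree-$(n-1)$ vertex, forcing a forbidden $(n-1,n-1)$-edge), leaving only $n=3$, $G\cong C_3$.

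I expect the main technical step to be the monotonicity argument for $\phi$ in the upper bound---specifically, verifying that the positive root of the relevant quadratic lies outside $[2,y]$ when $y\geq 2$; the left bound and both equality analyses then reduce to routine AM--GM and endpoint checks.
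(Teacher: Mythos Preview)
The paper does not actually prove this theorem: it is quoted from \cite{21} and stated without proof, so there is no in-paper argument to compare your proposal against. That said, your edgewise strategy is the natural one and your outline is essentially correct.

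Two small points are worth tightening. For the upper bound, after locating the positive root $x_{+}=\tfrac{1}{2}\bigl(y+\sqrt{y(9y-16)}\bigr)$ of $q(x)=x^{2}-xy-2y(y-2)$ and checking $x_{+}\ge y$ for $y\ge 2$, you still need $x_{-}\le 2$ to conclude $q\le 0$ on all of $[2,y]$; this is immediate from $q(2)=-2(y-2)(y+1)\le 0$, but it should be said. For the equality case of the upper bound, your phrase ``the unique degree-$(n-1)$ vertex'' presupposes uniqueness; it is cleaner to note first that two vertices of degree $n-1$ would be adjacent and produce a forbidden $(n-1,n-1)$ edge when $n>3$, and then run your argument for the remaining case of a single such vertex. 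With these clarifications the proof goes through.
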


Let $\delta_{1}$ be the minimum non-pendant vertex degree in G. Now, we compare GA index and ABC index for trees.
\begin{thm}
 If $T$ is a tree with $n\geq3$ vertices such that $m_{1,b}=0$ for all $b\geq4$ and $\Delta-\delta_{1}\leq(2\delta_{1}-1)^{2}$, then $GA(T)>ABC(T)$.
 \end{thm}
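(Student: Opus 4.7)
The plan is to partition $E(T)$ into pendant edges and non-pendant edges and to show, separately on each class, that $\theta_{ij}-\phi_{ij}>0$; a termwise sum then yields $GA(T)>ABC(T)$. Since $T$ is connected with $n\geq 3$, it contains at least one non-pendant vertex, so $\delta_{1}$ is well defined, and one has $\delta_{1}\geq 2$ automatically.

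For a pendant edge $ij$, one endpoint (say $v_{j}$) has degree $1$, while $d_{i}\neq 1$ because $T$ is connected with $n\geq 3$. The hypothesis $m_{1,b}(T)=0$ for every $b\geq 4$ forces $d_{i}\in\{2,3\}$, so the only possible degree pairs on a pendant edge are $(2,1)$ and $(3,1)$. Table 2 in the proof of Theorem \ref{t3} gives $\theta_{ij}-\phi_{ij}\approx 0.2357$ and $\approx 0.0495$ respectively, both of which are strictly positive.

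For a non-pendant edge $ij$, both endpoints are non-pendant, so $\delta_{1}\leq d_{i},d_{j}\leq \Delta$. Assuming without loss of generality that $d_{i}\geq d_{j}$, write $d_{i}=d_{j}+\theta$. Then $0\leq \theta\leq \Delta-\delta_{1}\leq(2\delta_{1}-1)^{2}$ and $\delta_{1}\leq d_{j}\leq \Delta\leq \delta_{1}+(2\delta_{1}-1)^{2}$. Setting $x=d_{j}$, $y=\theta$, and $k=\delta_{1}$, the hypotheses of Lemma \ref{L3} are satisfied, so $f(d_{j},\theta)>0$. But $f(d_{j},\theta)$ is precisely the quantity $\Gamma$ introduced in (\ref{1})--(\ref{2}) during the proof of Theorem \ref{t5}; hence $\theta_{ij}>\phi_{ij}$ on every non-pendant edge.

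Combining the two cases, $GA(T)-ABC(T)=\sum_{ij\in E(T)}(\theta_{ij}-\phi_{ij})>0$, which is the desired inequality. I expect no real obstacle here: both cases reduce either to inspection of Table 2 or to a direct application of Lemma \ref{L3}. The hypothesis $m_{1,b}(T)=0$ for $b\geq 4$ is genuinely needed, since for the degree pairs $(4,1),(5,1),(6,1)$ the quantity $\theta_{ij}-\phi_{ij}$ can be negative (compare (\ref{a})); dropping it would break the termwise positivity and force a compensating averaging argument in the style of the final estimate in the proof of Theorem \ref{t3}.
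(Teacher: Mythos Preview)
Your proposal is correct and follows essentially the same route as the paper: split $E(T)$ into pendant and non-pendant edges, handle the pendant ones via the explicit values in Table~2 (using $m_{1,b}=0$ for $b\geq4$ to restrict to the pairs $(2,1)$ and $(3,1)$), and handle the non-pendant ones by invoking Lemma~\ref{L3} with $k=\delta_{1}$ exactly as in the proof of Theorem~\ref{t5}. Your version is in fact slightly cleaner, since you argue termwise positivity directly rather than positivity of each partial sum, which silently covers the degenerate case where $T$ has no non-pendant edge.
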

\begin{proof}
Let us consider the difference
\begin{eqnarray}\nonumber
GA(T)-ABC(T)&=&\sum_{ij\in E(T)}(\theta_{ij}-\phi_{ij})\\\nonumber
&=&\sum_{ \substack{ ij\in E(T), \\
         d_{i}\neq1\neq d_{j}}}
         (\theta_{ij}-\phi_{ij})
+\sum_{\substack{ ij\in E(T), \\
         d_{i}=1 \text{ or } d_{j}=1}}
         (\theta_{ij}-\phi_{ij}).
\end{eqnarray}
As $m_{1,b}=0$ for all $b\geq4$, from (\ref{a}) it follows that
$$\sum_{\substack{ ij\in E(T), \\
         d_{i}=1 \text{ or } d_{j}=1}}
         (\theta_{ij}-\phi_{ij})>0$$
Now, we have to prove that
$$\sum_{ \substack{ ij\in E(T), \\
         d_{i}\neq1\neq d_{j}}}
         (\theta_{ij}-\phi_{ij})>0$$
To do so, let $d_{i},d_{j}\geq2$ then using the same technique, adopted in the proof of Theorem \ref{t5}, we have
$$d_{i}^{2}d_{j}^{2}-\frac{1}{4}(d_{i}+d_{j})^{2}(d_{i}+d_{j}-2)>0$$
which is equivalent to
$$\frac{2\sqrt{d_{i}d_{j}}}{d_{i}+d_{j}}>\sqrt{\frac{d_{i}+d_{j}-2}{d_{i}d_{j}}}$$
which implies that
$$\sum_{ \substack{ ij\in E(T), \\
         d_{i}\neq1\neq d_{j}}}\left(\frac{2\sqrt{d_{i}d_{j}}}{d_{i}+d_{j}}-\sqrt{\frac{d_{i}+d_{j}-2}{d_{i}d_{j}}}\right)>0.$$
This completes the proof.
\end{proof}
Now, for Starlike tree, we have the following result.
\begin{thm}\label{t9}
  Let $S=S(r_{1},r_{2},...,r_{k})$ be a Starlike tree.
\begin{enumerate}
\item If $r_{i}\geq4$ for all $i$, then $GA(S)>ABC(S)$.

\item If $r_{i}\geq2$ for all $i$ and $\frac{\sum\limits_{i=1}^kr_{i}}{k}\geq4$, then $GA(S)>ABC(S)$.

\item If $\frac{\sum\limits_{i=1}^kr_{i}}{k}\geq8$, then $GA(S)>ABC(S)$.
\end{enumerate}
\end{thm}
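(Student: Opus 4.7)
The plan is to split $GA(S)-ABC(S)$ according to which branch each edge of $S$ lies in. Let $v$ denote the central vertex of degree $k\ge 3$. A branch of length $r_i=1$ contributes a single $(k,1)$-edge, whereas a branch of length $r_i\ge 2$ contributes exactly one $(k,2)$-edge incident to $v$, $r_i-2$ edges of type $(2,2)$ internal to the branch, and one $(2,1)$-edge incident to the pendant vertex at the far end.

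Writing $s$ for the number of length-one branches and $R=\sum_{i=1}^{k}r_i$, and setting
\[
a=1-\tfrac{1}{\sqrt 2},\ \ b=\tfrac{2\sqrt 2}{3}-\tfrac{1}{\sqrt 2},\ \ p(k)=\tfrac{2\sqrt{2k}}{k+2}-\tfrac{1}{\sqrt 2},\ \ c_1(k)=\tfrac{2\sqrt k}{k+1}-\sqrt{\tfrac{k-1}{k}}
\]
for the values of $\theta_{ij}-\phi_{ij}$ on $(2,2)$-, $(2,1)$-, $(k,2)$- and $(k,1)$-edges respectively, a routine count will yield
\begin{equation}\label{planeq}
GA(S)-ABC(S)\;=\;s\,c_1(k)+(k-s)\bigl(p(k)+b\bigr)+a\bigl(R-2k+s\bigr).
\end{equation}

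Parts (1) and (2) follow immediately: both hypotheses force $s=0$ and $R\ge 4k$, so (\ref{planeq}) is bounded below by $k\bigl(p(k)+b+2a\bigr)=k\bigl(\tfrac{2\sqrt{2k}}{k+2}+2-\tfrac{4\sqrt 2}{3}\bigr)$, which is strictly positive for every $k\ge 3$ since $2>\tfrac{4\sqrt 2}{3}$. Note that part (1) is in fact a special case of part (2).

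The substantive case, and the main obstacle, is part (3), where length-one branches are permitted and both $c_1(k)$ and $p(k)$ drift to negative values as $k$ grows (with $c_1(k)\to -1$ and $p(k)\to -\tfrac{1}{\sqrt 2}$), forcing the $(2,2)$-edges to absorb two sources of negative contribution rather than one. The idea is to insert the uniform bounds $c_1(k)\ge -1$ and $p(k)+b\ge b-\tfrac{1}{\sqrt 2}=-\tfrac{\sqrt 2}{3}$ into (\ref{planeq}) and collect terms. The coefficient of $s$ in the resulting linear expression turns out to be the negative constant $-\tfrac{\sqrt 2}{6}$, so the worst case is $s=k$; combining this with $R\ge 8k$ collapses the bound to $k\bigl(6-\tfrac{7\sqrt 2}{2}\bigr)>0$. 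Morally, the threshold $\bar r\ge 8$ is essentially what is required for the $(2,2)$-surplus alone to dominate the worst-case losses from the two pendant-related edge types when $k$ is large.
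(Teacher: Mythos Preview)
Your argument is correct and follows the same edge-type decomposition as the paper, bounding the $(k,2)$- and $(k,1)$-contributions uniformly from below by $-\tfrac{1}{\sqrt 2}$ and $-1$ respectively and letting the $(2,2)$/$(2,1)$ surplus absorb them. The paper only writes out part~(1) (using decimal approximations rather than exact surds) and then leaves parts~(2) and~(3) to the reader after recording $\theta_{1k}-\phi_{1k}>-1$; your explicit closed formula for $GA(S)-ABC(S)$, the observation that (1) is a special case of (2), and the worked-out constant $6-\tfrac{7\sqrt 2}{2}>0$ for part~(3) go beyond what the paper actually supplies.
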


\begin{proof}
(1)
The edges of $S$ with possible degree pairs are: $(2,1),(2,2),(k,2)$. From Table 2 we have
\begin{equation}\label{3}
\theta_{ij}-\phi_{ij}\approx
\begin{cases}
0.2357 & \text{if } (d_{i},d_{j})=(2,1)\\
       0.2929 & \text{if } (d_{i},d_{j})=(2,2)
\end{cases}
\end{equation}
 Moreover, the function $f(k)=\theta_{2k}-\phi_{2k}=\left(\frac{2\sqrt{2k}}{k+2}-\frac{1}{\sqrt{2}}\ \right)$ is decreasing in $(2,\infty) $ and $f(k)\rightarrow -\frac{1}{\sqrt{2}}\approx-0.7071$ when $k\rightarrow\infty$. Hence we have $f(k)>-\frac{1}{\sqrt{2}}\approx-0.7071$ for all $k$. Since $r_{i}\geq4$ for all $i$, this implies that there are $k$ edges with degree pairs $(1,2)$, $k$ edges with degree pairs $(2,k)$ and at least $2k$ edges with degree pairs $(2,2)$ in $S$. This completes the proof of part $(1)$.

 Note that $\frac{2\sqrt{d_{i}d_{j}}}{d_{i}+d_{j}}-\sqrt{\frac{d_{i}+d_{j}-2}{d_{i}d_{j}}}> -1$ if $(d_{i},d_{j})=(1,k)$ for all $k$. Using the same technique, adopted in the proof of part $(1)$, one can easily prove parts $(2)$ and $(3)$.
\end{proof}

Let $W_{n}$ be the wheel graph of order $n$. Then
\[GA(W_{n})=(n-1)\left(1+\frac{2\sqrt{3(n-1)}}{n+2}\ \right)\]
and
\[ABC(W_{n})=(n-1)\left(\frac{2}{3}+\sqrt{\frac{n}{3(n-1)}}\ \right).\]
It can be easily verified that $GA(W_{n})>ABC(W_{n})$ for $4\leq n\leq194$ and $GA(W_{n})<ABC(W_{n})$ for $n\geq195$. Is there any graph $G$ with the property $GA(G)=ABC(G)$? All our attempts to find such a graph were unsuccessful. We end this section with following conjecture.
\begin{conj}
If $G$ is a non-trivial and connected graph, then $GA(G)\neq ABC(G)$.
\end{conj}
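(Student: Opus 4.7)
The plan is to write $GA(G)-ABC(G)=\sum_{a\le b} n_{a,b}\,c_{a,b}$, where $n_{a,b}=m_{a,b}(G)$ is the number of edges joining a vertex of degree $a$ with one of degree $b$, and
\[
c_{a,b} \;=\; \frac{2\sqrt{ab}}{a+b}\;-\;\sqrt{\frac{a+b-2}{ab}}.
\]
The conjecture then asserts that for every non-trivial connected graph $G$, this particular $\mathbb{Z}_{\ge 0}$-linear combination of the numbers $c_{a,b}$ is nonzero.

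First, I would verify that no single $c_{a,b}$ vanishes. Squaring $c_{a,b}=0$ gives the Diophantine relation $4a^{2}b^{2}=(a+b)^{2}(a+b-2)$; for $a=b$ this collapses to $a^{2}=2a-2$, which has no real solution, and for $a<b$ a short monotonicity/size comparison, done case-by-case for small $a$, rules out integer solutions. So every edge of $G$ already contributes a strictly nonzero amount to $GA(G)-ABC(G)$.

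Next, I would pin down the sign pattern of $c_{a,b}$. The cubic obtained from $c_{1,b}=0$ shows $c_{1,b}>0$ for $b\le 3$ and $c_{1,b}<0$ for $b\ge 4$, while Lemma \ref{L3}, applied edge-locally, already forces $c_{a,b}>0$ whenever $\min(a,b)\ge 2$. Consequently, if $GA(G)=ABC(G)$ were possible then $G$ must contain at least one pendant edge at a vertex of degree $\ge 4$, and the small negative contribution of such \emph{bad} edges would need to be cancelled exactly by the positive contribution of all remaining edges. Explicit numerical bounds of the form $c_{1,b}\ge -\eta$ and $c_{a,b}\ge\varepsilon>0$ for $\min(a,b)\ge 2$ turn this into a finite constraint on the ratio of bad to good edges, which further restricts $\Delta(G)$ and the degree sequence of $G$.

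The final and hardest step is to rule out exact equality once the set of candidate degree pairs has been narrowed. The natural tool is linear independence over $\mathbb{Q}$ of the square roots appearing in $c_{a,b}$: each such number lies in $\mathbb{Q}(\sqrt{r_{1}},\sqrt{r_{2}},\ldots)$ for a short finite list of squarefree radicands depending on $a$ and $b$, and a Kummer-type argument would reduce $\sum n_{a,b}c_{a,b}=0$ to several separate vanishing relations, one per essential radical. Combined with the realisability constraints on the $n_{a,b}$ (degree-sum parity, handshake identities, and the bound on adjacent pendants already used in Theorem \ref{t3}), one hopes to force every $n_{a,b}=0$ and hence $E(G)=\varnothing$, contradicting non-triviality. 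The genuine obstacle, and presumably the reason the authors leave this as a conjecture, is precisely the radical coincidences such as $\phi_{1,2}=\phi_{2,2}=1/\sqrt{2}$ and $\phi_{2,4}=\phi_{3,3}=\phi_{3,6}=1/\sqrt{2}$-type collisions: they allow disparate degree pairs to contribute to the same irrational component, so the Kummer reduction is far less restrictive than one would wish, and a purely algebraic contradiction is elusive. A complete proof will almost certainly need to combine the algebraic-independence reduction with a structural classification of the few small degree configurations in which the coincidences could conceivably line up.
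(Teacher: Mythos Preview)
The statement you are attempting to prove is labeled a \emph{Conjecture} in the paper; the authors explicitly report that all their attempts to find a graph with $GA(G)=ABC(G)$ failed, and they give no proof. There is therefore no paper proof to compare your proposal against, and your proposal itself is (as you concede in its last paragraph) an incomplete strategy rather than a proof.

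Beyond its acknowledged incompleteness, your outline contains a concrete error that undermines the middle step. You assert that Lemma~\ref{L3}, applied edge-locally, forces $c_{a,b}>0$ whenever $\min(a,b)\ge 2$. That is false: Lemma~\ref{L3} only yields positivity under the side condition $|a-b|\le(2\min(a,b)-1)^2$, which is exactly the hypothesis of Theorem~\ref{t5} and the two theorems following it. For instance, with $a=2$ one has
\[
c_{2,b}=\frac{2\sqrt{2b}}{b+2}-\frac{1}{\sqrt{2}},
\]
which is negative as soon as $b\ge 12$; likewise $c_{3,b}<0$ for large $b$, which is precisely what drives the wheel example $W_n$ with $GA(W_n)<ABC(W_n)$ for $n\ge 195$ that the paper records just before stating the conjecture. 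Hence the ``bad'' edges are not confined to pendant edges incident with a vertex of degree $\ge 4$: any edge whose endpoints have a sufficiently large degree ratio contributes negatively. Your reduction ``$GA(G)=ABC(G)$ forces a pendant edge at a vertex of degree $\ge 4$'' is therefore unjustified, and the subsequent finite bound on $\Delta(G)$ via a uniform $\varepsilon$--$\eta$ comparison collapses, since no uniform positive lower bound $c_{a,b}\ge\varepsilon$ exists over all pairs with $\min(a,b)\ge 2$.

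A smaller slip: in your list of radical coincidences, $\phi_{3,3}=\sqrt{4/9}=2/3$ and $\phi_{3,6}=\sqrt{7/18}$, so neither equals $1/\sqrt{2}$; the genuine collisions among the values in Tables~1--2 are $\phi_{1,2}=\phi_{2,2}=\phi_{2,4}=\phi_{2,5}=\phi_{2,6}=1/\sqrt{2}$. This actually reinforces your closing point that the Kummer-type reduction is weak, but the examples should be corrected.
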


\section{Conclusion}
In \cite{18}, comparison between $GA$ index and $ABC$ index for general trees and general graphs was left as an open problem. Theorems \ref{t3} - \ref{t9} provide a partial solution of this open problem. The complete solution of the said problem remains a task for future.

\bibliography{MMN}

\begin{thebibliography}{10}
\expandafter\ifx\csname url\endcsname\relax
  \def\url#1{\texttt{#1}}\fi
\expandafter\ifx\csname urlprefix\endcsname\relax\def\urlprefix{URL }\fi

\bibitem{21}
\textsc{Ali, A., Bhatti, A., and Raza, Z.}: \emph{Further inequalities between
  vertex-degree-based topological indices}, arXiv:1401.7511 [math.CO].

\bibitem{23}
\textsc{Ashrafi, A.~R., Dehghan-Zadeh, T., Habibi, N., and John, P.~E.}:
  \emph{Maximum values of atom-bond connectivity index in the class of
  tricyclic graphs}, J. Appl. Math. Comput., \textbf{46} (2015),
  10.1007/s12190--015--0882--x.

\bibitem{13}
\textsc{Chen, J., Liu, J., and Guo, X.}: \emph{Some upper bounds for the
  atom-bond connectivity index of graphs}, Appl. Math. Lett., \textbf{25}
  (2012), No.~7, 1077--1081.

\bibitem{10}
\textsc{Das, K.}: \emph{Atom-bond connectivity index of graphs}, Discr. Appl.
  Math., \textbf{158}, No.~11, 1181--1188.

\bibitem{11}
\textsc{Das, K.~C., Gutman, I., and Furtula, B.}: \emph{On atom-bond
  connectivity index}, Chem. Phys. Lett., \textbf{511} (2011), No. 4-6,
  452--454.

\bibitem{6}
\textsc{Das, K.~C., Gutman, I., and Furtula, B.}: \emph{Survey on
  geometric-arithmetic indices of graphs}, MATCH Commun. Math. Comput. Chem.,
  \textbf{65} (2011), No.~3, 595--644.

\bibitem{18}
\textsc{Das, K.~C. and Trinajsti\'{c}, N.}: \emph{Comparison between first
  geometric-arithmetic index and atom-bond connectivity index}, Chem. Phys.
  Lett., \textbf{497} (2010), No. 1-3, 149--151.

\bibitem{2}
\textsc{Devillers, J. and Balaban, A.}: \emph{Topological Indices and Related
  Descriptors in QSAR and QSPR}, Gordon and Breach Science, Amsterdam, 1999.

\bibitem{22}
\textsc{Dimitrov, D.}: \emph{On structural properties of trees with minimal
  atom-bond connectivity index}, Discr. Appl. Math., \textbf{172} (2014),
  28--44.

\bibitem{8}
\textsc{Estrada, E.}: \emph{Atom-bond connectivity and the energetic of
  branched alkanes}, Chem. Phys. Lett., \textbf{463} (2008), No. 4-6, 422--425.

\bibitem{7}
\textsc{Estrada, E., Torres, L., Rodr\'{i}guez, L., and Gutman, I.}: \emph{An
  atom-bond connectivity index: modelling the enthalpy of formation of
  alkanes}, Indian J. Chem. A, \textbf{37} (1998), 849--855.

\bibitem{5}
\textsc{Fath-Tabar, G.~H., Furtula, B., and Gutman, I.}: \emph{A new
  geometric-arithmetic index}, J. Math. Chem., \textbf{47} (2010), No.~1,
  477--486.

\bibitem{14}
\textsc{Furtula, B., Graovac, A., and Vuki\v{c}evi\'{c}, D.}: \emph{Atom-bond
  connectivity index of trees}, Discr. Appl. Math., \textbf{157} (2009),
  No.~13, 2828--2835.

\bibitem{15}
\textsc{Gan, L., Hou, H., and Liu, B.}: \emph{Some results on atom-bond
  connectivity index of graphs}, MATCH Commun. Math. Comput. Chem., \textbf{66}
  (2011), No.~2, 669--680.

\bibitem{19}
\textsc{Gutman, I. and Estrada, E.}: \emph{Topological indices based on the
  line graph of the molecular graph}, J. Chem. Inf. Comput. Sci., \textbf{36}
  (1996), No. 1-3, 541--543.

\bibitem{3}
\textsc{Gutman, I. and Furtula, B.}: \emph{Novel Molecular Structure
  Descriptors—Theory and Applications}, vol. I-II, Univ. Kragujevac,
  Kragujevac, 2010.

\bibitem{1}
\textsc{Harary, F.}: \emph{Graph Theory}, Addison-Wesley, Reading, 1969.

\bibitem{9}
\textsc{Hosseini, S.~A., Ahmadi, M.~B., and Gutman, I.}: \emph{Kragujevac trees
  with minimal atom-bond connectivity index}, \textbf{71} (2014), No.~1, 5--20.

\bibitem{17}
\textsc{Lin, W., Gao, T., Chen, Q., and Lin, X.}: \emph{On the minimal abc
  index of connected graphs with given degree sequence}, MATCH Commun. Math.
  Comput. Chem., \textbf{69} (2013), No.~3, 571--578.

\bibitem{24}
\textsc{Palacios, J.~L.}: \emph{A resistive upper bound for the abc index},
  MATCH Commun. Math. Comput. Chem., \textbf{72} (2014), No.~3, 709--713.

\bibitem{4}
\textsc{Vuki\v{c}evi\'{c}, D. and Furtula, B.}: \emph{Topological index based
  on the ratios of geometrical and arithmetical means of end-vertex degrees of
  edges}, J. Math. Chem., \textbf{46} (2009), No.~4, 1369--1376.

\bibitem{16}
\textsc{Xing, R., Zhou, B., and Dong, F.}: \emph{On atom-bond connectivity
  index of connected graphs}, Discr. Appl. Math., \textbf{159} (2011), No.~15,
  1617--1630.

\bibitem{12}
\textsc{Zhou, B. and Xing, R.}: \emph{On atom-bond connectivity index}, Z.
  Naturforsch. A, \textbf{66} (2011), No. 1-2, 61--66.

\end{thebibliography}

\bibliographystyle{MMN}

\end{document}